\newtheorem{theorem}{Theorem}
\newcommand{\R}{\mathbb{R}}
\newcommand{\C}{\mathbb{C}}
\newcommand{\N}{\mathbb{N}}
\let\vec\boldsymbol
\begin{document}

\title{A Space-Time Continuous Galerkin Finite Element Method for Linear Schrödinger Equations}

\author{Marco Zank}
\date{
      Institut f\"ur Analysis und Scientific Computing, TU Wien, Vienna, Austria \\[1mm]
      {\tt marco.zank@tuwien.ac.at}
      }

\maketitle

\begin{abstract}
    We introduce a space-time finite element method for the linear time-de\-pen\-dent Schrö\-dinger equation with Dirichlet conditions in a bounded Lipschitz domain. The proposed discretization scheme is based on a space-time variational formulation of the time-de\-pen\-dent Schrö\-dinger equation. In particular, the space-time method is conforming and is of Galerkin-type, i.e., trial and test spaces are equal. We consider a tensor-product approach with respect to time and space, using piecewise polynomial, continuous trial and test functions. In this case, we state the global linear system and efficient direct space-time solvers based on exploiting the Kronecker structure of the global system matrix. This leads to the Bartels--Stewart method and the fast diagonalization method. Both methods result in solving a sequence of spatial subproblems. In particular, the fast diagonalization method allows for solving the spatial subproblems in parallel, i.e., a time parallelization is possible. Numerical examples for a two-dimensional spatial domain illustrate convergence in space-time norms and show the potential of the proposed solvers.

    \bigskip

    \noindent \textbf{Keywords:} linear time-de\-pen\-dent Schrö\-ding\-er equation $\cdot$ space-time method $\cdot$ direct solver~$\cdot$ time parallelization
\end{abstract}

\section{Introduction}

The Schrö\-ding\-er equation is the central governing equation in quantum mechanics, where its solution, the wave function, is usually complex-valued and yields the time evolution of the state in quantum mechanics. Moreover, in several physical fields, the Schrö\-ding\-er equation plays a major role. Classical approaches for computing approximate solutions for Schrö\-ding\-er equations separate the temporal and spatial directions. In particular, time-stepping schemes are applied in connection with different spatial discretizations, e.g., finite difference methods, finite element methods or spectral methods. We refer to the overview in the review article \cite{LasserLubich2020}. An alternative is an approach via space-time methods, where the temporal variable $t$ is just another spatial variable. Recently, space-time methods for the linear Schrö\-ding\-er equation have been introduced. In particular, nonconforming approaches of discontinuous Galerkin-type are considered in \cite{Demkowicz2017} and \cite{GomezMoiola2024}. In \cite{BressanKushovaSangalliTani2023}, a least squares approach based on \cite{Demkowicz2017} is considered, whereas \cite{HainUrbanSchroedinger2024} relies on an ultraweak variational formulation. However, these approaches in \cite{BressanKushovaSangalliTani2023} and \cite{HainUrbanSchroedinger2024} require $H^2$ conformity, which could lead to a delicate construction of the approximation spaces.

In this work, we introduce a space-time finite element method for the time-de\-pen\-dent Schrö\-ding\-er equation, which only requires $H^1$ conformity. We apply a tensor-product approach using piecewise polynomial, continuous trial and test functions. We state the resulting global linear system. Further, we investigate efficient direct space-time solvers, including time parallelization, which are based on the Bartels--Stewart method~\cite{BartelsStewart1972} or on the fast diagonalization method~\cite{Lynch1964}. The model problem is the nondimensionalized time-de\-pen\-dent Schrö\-ding\-er equation with Dirichlet conditions to find the wave function $\psi \colon \, \overline{\Omega} \times [0,T] \to \C$ such that
\begin{equation} \label{zank:SchroedingerBeschraenkt}
    \left. \begin{array}{rclcl}
    \mathrm{i} \partial_t \psi(x,t) - \Delta_x \psi(x,t) & = & f(x,t) & \quad & \mbox{for} \;
    (x,t) \in Q = \Omega \times (0,T), \\[1mm]
    \psi(x,t) & = & 0 & & \mbox{for} \; (x,t) \in  \partial\Omega \times [0,T],
    \\[1mm]
    \psi(x,0) & = &\psi_0(x) & & \mbox{for} \; x \in \Omega,
    \end{array} \right\}
\end{equation}
where $f\colon \, \Omega \times (0,T) \to \C$ is a given right-hand side, and $\psi_0\colon \, \Omega \to \C$ is a given initial condition. Further, $\Omega \subset \R^d$, $d \in \N$, is a bounded Lipschitz domain, which is an interval $\Omega = (0,L)$ for $d=1$, polygonal for $d=2$, or polyhedral for $d=3$, and $T>0$ is the terminal time.

The rest of the paper is organized as follows: In Section~\ref{zank:sec:framework}, the used function spaces and the unique solvability of the Schrö\-ding\-er equation are considered. Section~\ref{zank:sec:FEM} is devoted to the introduction of the space-time finite element method and space-time solvers. In Section~\ref{zank:sec:Num}, we present numerical examples in a two-dimensional spatial domain. Finally, we draw some conclusions in Section~\ref{zank:sec:Zum}.

\section{Functional Framework} \label{zank:sec:framework}

In this section, we introduce function spaces and state unique solvability for the time-de\-pen\-dent Schrö\-ding\-er equation~(\ref{zank:SchroedingerBeschraenkt}). For this purpose, we define the solution space
\[
    W(Q) = \{ u \in L^2(0,T;H^1_0(\Omega)) : \partial_t u \in L^2(0,T; [H^1_0(\Omega)]^*) \}.
\]
As $W(Q) \subset C([0,T]; L^2(\Omega))$, we introduce the subspace
\[
  W_{0,}(Q) = \{ u \in W(Q): u(\cdot,0)=0 \}.
\]
Here, all spaces are complex vector spaces. In particular, the usual Lebesgue space $L^2(\mathrm{D})$ consists of complex-valued functions $u\colon\, \mathrm{D} \to \C$ and is endowed with the inner product
\[
  \langle u, v \rangle_{L^2(\mathrm{D})} = \int_\mathrm{D} u(x) \overline{v(x)} \mathrm dx,
\]
where $\mathrm{D} \subset \R^m$, $m \in \N$, is a bounded Lipschitz domain. Analogously, we consider a vector-valued version $L^2(\mathrm{D})^m$ and the extension to Bochner spaces $L^2(0,T;X)$ for a complex Hilbert space $X$. Moreover, the Sobolev space $H^1_0(\mathrm{D})$ consists of complex-valued functions, which are zero on the boundary $\partial \mathrm{D}$, and we equip $H^1_0(\mathrm{D})$ with the inner product $\langle u, v \rangle_{H^1_0(\mathrm{D})}= \langle \nabla_x u, \nabla_x v \rangle_{L^2(\mathrm{D})^m}$, inducing a norm due to the Poincar\'{e} inequality. The antidual $[H^1_0(\mathrm{D})]^*$ is the space of conjugate linear continuous functionals from $H^1_0(\mathrm{D})$ to $\C$, where $\langle \cdot, \cdot \rangle_\mathrm{D}$ extends continuously the $L^2(\mathrm{D})$ inner product to functionals. Last, the space $C([0,T]; X)$ contains all continuous functions $u \colon \, [0,T] \to X$ for a complex Hilbert space $X$. With this notation, we state the unique solvability of the time-de\-pen\-dent Schrö\-ding\-er equation~(\ref{zank:SchroedingerBeschraenkt}).
\begin{theorem} \label{zank:thm:loesbarkeit}
    Let the given right-hand side $f \in L^2(Q)$ satisfy the condition
    \[
      \partial_t f \in L^2(0,T; [H^1_0(\Omega)]^*)
    \]
    and let the given initial data $\psi_0 \in H^1_0(\Omega)$. Then, a unique solution $\psi \in W(Q)$ to the time-de\-pen\-dent Schrö\-ding\-er equation~(\ref{zank:SchroedingerBeschraenkt}) exists such that the regularity results
    \[
      \psi \in C([0,T]; H^1_0(\Omega)) \quad \text{ and } \quad \partial_t \psi \in C([0,T]; [H^1_0(\Omega)]^*)
    \]
    hold true.
\end{theorem}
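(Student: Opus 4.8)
The plan is to recast \eqref{zank:SchroedingerBeschraenkt} as an abstract Cauchy problem and to exploit that the Dirichlet Laplacian generates a unitary group. Let $A = -\Delta_x$ with domain $D(A) = H^2(\Omega)\cap H^1_0(\Omega)$; then $A$ is self-adjoint and positive definite on $L^2(\Omega)$ (positivity by the Poincaré inequality), so $D(A^{1/2}) = H^1_0(\Omega)$ and, by duality, $D(A^{-1/2}) = [H^1_0(\Omega)]^*$, with $A\colon H^1_0(\Omega)\to[H^1_0(\Omega)]^*$ an isometric isomorphism. Writing the equation as $\partial_t\psi = -\mathrm{i}A\psi - \mathrm{i}f$, Stone's theorem yields that $-\mathrm{i}A$ generates a strongly continuous group $(e^{-\mathrm{i}tA})_{t\in\R}$ of unitary operators on $L^2(\Omega)$; since $e^{-\mathrm{i}tA}$ commutes with every power of $A$, it restricts, respectively extends, to a unitary group on each space $D(A^{s})$, in particular on $H^1_0(\Omega)$ and on $[H^1_0(\Omega)]^*$. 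The candidate solution is then given by the variation-of-constants (Duhamel) formula
\[
  \psi(t) = e^{-\mathrm{i}tA}\psi_0 - \mathrm{i}\int_0^t e^{-\mathrm{i}(t-s)A} f(s)\,\mathrm ds.
\]
Equivalently, expanding in the $L^2(\Omega)$-orthonormal eigenbasis $\{\phi_k\}$ of $A$ with eigenvalues $\lambda_k>0$ reduces the problem to the decoupled scalar ODEs $\mathrm i c_k' + \lambda_k c_k = f_k$, $c_k(0)=\psi_{0,k}$, which makes all of the estimates below completely explicit.

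For uniqueness I would test the homogeneous equation ($f=0$, $\psi_0 = 0$) with the solution itself: for $w\in W_{0,}(Q)$ solving $\mathrm i\partial_t w - \Delta_x w = 0$, the duality pairing gives $\mathrm i\langle\partial_t w(t), w(t)\rangle_\Omega + \|w(t)\|_{H^1_0(\Omega)}^2 = 0$, and taking imaginary parts (the second term being real) yields $\tfrac{\mathrm d}{\mathrm dt}\|w(t)\|_{L^2(\Omega)}^2 = 2\,\mathrm{Re}\langle\partial_t w(t), w(t)\rangle_\Omega = 0$. Hence $\|w(t)\|_{L^2(\Omega)} = \|w(0)\|_{L^2(\Omega)} = 0$, i.e. mass conservation forces $w\equiv 0$. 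That the Duhamel formula indeed defines an element of $W(Q)$, so that existence holds in the stated space, follows from the unitarity estimates together with the regularity established next.

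The heart of the argument is the regularity, and here the main obstacle is that, unlike for parabolic problems, the Schrödinger group does \emph{not} smooth: from $f\in L^2(Q)$ alone the Duhamel integral is only controlled in $C([0,T];L^2(\Omega))$, and a direct estimate of its $H^1_0(\Omega)$-norm would require $f\in L^2(0,T;H^1_0(\Omega))$, which is not assumed. This is precisely where the hypothesis $\partial_t f\in L^2(0,T;[H^1_0(\Omega)]^*)$ enters. The idea is to integrate by parts in time in the Duhamel term, using $\tfrac{\mathrm d}{\mathrm ds}e^{-\mathrm i(t-s)A} = \mathrm i A\,e^{-\mathrm i(t-s)A}$, which produces a factor $A^{-1}$:
\[
  \psi(t) = e^{-\mathrm itA}\psi_0 - A^{-1}\Big( f(t) - e^{-\mathrm itA} f(0) - \int_0^t e^{-\mathrm i(t-s)A}\partial_t f(s)\,\mathrm ds\Big).
\]
Since $f\in L^2(Q)$ with $\partial_t f\in L^2(0,T;[H^1_0(\Omega)]^*)$ gives $f\in C([0,T];[H^1_0(\Omega)]^*)$, the boundary values $f(t), f(0)$ lie in $[H^1_0(\Omega)]^*$; the remaining integral is continuous in $t$ with values in $[H^1_0(\Omega)]^*$ because $e^{-\mathrm i(t-s)A}$ is unitary there and $\partial_t f\in L^2(0,T;[H^1_0(\Omega)]^*)$. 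Applying the bounded isomorphism $A^{-1}\colon[H^1_0(\Omega)]^*\to H^1_0(\Omega)$ and recalling $e^{-\mathrm itA}\psi_0\in C([0,T];H^1_0(\Omega))$ (by unitarity on $H^1_0(\Omega)$ and strong continuity, via dominated convergence on the modal sum $\sum_k\lambda_k|\psi_{0,k}|^2$), we obtain $\psi\in C([0,T];H^1_0(\Omega))$. Finally, reading the regularity of $\partial_t\psi$ off the equation, $\partial_t\psi = -\mathrm i\big(f + \Delta_x\psi\big) = -\mathrm i\big(f - A\psi\big)$, and since $A\colon H^1_0(\Omega)\to[H^1_0(\Omega)]^*$ is bounded while $f\in C([0,T];[H^1_0(\Omega)]^*)$, it follows that $\partial_t\psi\in C([0,T];[H^1_0(\Omega)]^*)$, which completes the plan.
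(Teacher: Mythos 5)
The paper does not actually prove this theorem: its ``proof'' consists of citing Theorem~1 and Remark~3 of Section~7, Chapter~XVIII of Dautray--Lions. Your argument is a correct, self-contained proof that follows essentially the same route as that reference: spectral decomposition of the Dirichlet Laplacian (equivalently Stone's theorem), the Duhamel formula, and --- the one genuinely non-obvious step --- an integration by parts in time in the Duhamel integral so that the hypothesis $\partial_t f\in L^2(0,T;[H^1_0(\Omega)]^*)$ (which gives $f\in H^1(0,T;[H^1_0(\Omega)]^*)\hookrightarrow C([0,T];[H^1_0(\Omega)]^*)$) can be traded, via the isomorphism $A^{-1}\colon [H^1_0(\Omega)]^*\to H^1_0(\Omega)$, for continuity of $\psi$ with values in $H^1_0(\Omega)$. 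You correctly identified that this is where the unusual assumption on $f$ is consumed, and your uniqueness argument via mass conservation is the standard one. What your write-up buys over the paper's citation is transparency: the modal reduction to $\mathrm{i}c_k'+\lambda_k c_k=f_k$ makes every estimate explicit and also shows why no smoothing is available, in contrast to the parabolic case.

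Two minor corrections. First, there is a sign slip in the abstract reformulation: with $A=-\Delta_x$ the equation $\mathrm{i}\partial_t\psi+A\psi=f$ gives $\partial_t\psi=\mathrm{i}A\psi-\mathrm{i}f$, not $-\mathrm{i}A\psi-\mathrm{i}f$, so the propagator is $e^{\mathrm{i}tA}$; your final identity $\partial_t\psi=-\mathrm{i}(f-A\psi)$ is the correct one, and the rest of the argument is unaffected since unitarity is insensitive to the sign. Second, on a general Lipschitz (in particular nonconvex polygonal) domain one does not have $D(A)=H^2(\Omega)\cap H^1_0(\Omega)$; you should take $A$ to be the Friedrichs extension with domain $\{u\in H^1_0(\Omega):\Delta_x u\in L^2(\Omega)\}$. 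Only $D(A^{1/2})=H^1_0(\Omega)$ and the extension of $A$ to an isomorphism $H^1_0(\Omega)\to[H^1_0(\Omega)]^*$ are actually used, so this is cosmetic. Finally, the verification that the mild solution satisfies the variational formulation \eqref{zank:VF_inhomogen} is left implicit; it follows by the standard density argument (smooth data, then pass to the limit) or directly from the modal ODEs, and is worth a sentence.
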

\begin{proof}
  The unique solvability is proven in \cite[Theorem~1, Section~7 in Chapter~XVIII]{DautrayLions51992}. The regularity results are given in \cite[Remark~3, Section~7 in Chapter~XVIII]{DautrayLions51992}.
\end{proof}

\section{Space-Time Finite Element Method} \label{zank:sec:FEM}

In this section, we design a space-time finite element method for the time-de\-pen\-dent Schrö\-ding\-er equation~(\ref{zank:SchroedingerBeschraenkt}), which are based on a space-time variational setting stated first. The solution $\psi \in W(Q)$ of Theorem~\ref{zank:thm:loesbarkeit} satisfies the space-time variational formulation to find $\psi \in W(Q)$ such that $\psi(\cdot,0)=\psi_0$ and
\begin{equation} \label{zank:VF_inhomogen}
    \forall v \in L^2(0,T;H^1_0(\Omega)): \quad a(\psi,v) = \langle f, v \rangle_{L^2(Q)}
\end{equation}
with the sesquilinear form $a(\cdot,\cdot) \colon \, W(Q) \times L^2(0,T;H^1_0(\Omega)) \to \C$,
\[
    a(u,v) =  \mathrm{i} \langle \partial_t u, v \rangle_{Q} + \langle \nabla_x u, \nabla_x v \rangle_{L^2(Q)^d}.
\]
Further, we introduce the function
\[
  \widehat \psi(x,t) = \psi(x,t) - \psi_0(x) \quad \mbox{ for } (x,t) \in Q,
\]
which satisfies $\widehat \psi \in W_{0,}(Q)$. Plugging this into the variational formulation~(\ref{zank:VF_inhomogen}) yields the space-time variational formulation to find $\widehat \psi \in W_{0,}(Q)$ such that
\[
    \forall v \in L^2(0,T;H^1_0(\Omega)): \quad a(\widehat \psi,v) = \langle f, v \rangle_{L^2(Q)} - a(\psi_0,v).
\]
Next, we introduce the approximation spaces used for the space-time finite element method. For this purpose, we decompose the spatial domain $\Omega$ by
$
 \overline{\Omega} = \bigcup_{\ell=1}^{N_x} \overline{\omega}_\ell
$
with $N_x \in \N$ spatial elements $\omega_\ell \subset \R^d$, where we assume that the spatial mesh
\[
  \mathcal T = \{ \omega_\ell \}_{\ell=1}^{N_x}
\]
is admissible. In the following, a shape-regular sequence $(\mathcal T_\nu)_{\nu \in \N}$ of decompositions of $\Omega$ is considered, which, for simplicity, consists of intervals for $d=1$, triangles for $d=2$, and tetrahedra for $d=3$ as elements $\omega_\ell$. The temporal mesh is given by the vertices
\[
  0 = t_0 < t_1 < \dots < t_{N_t}=T
\]
for $N_t \in \N$ elements $(t_{\ell-1}, t_{\ell}) \subset \R$. We denote by $h_x$ and $h_t$ the maximal mesh size with respect to space and time, respectively. To the temporal mesh and the spatial mesh $\mathcal T_\nu$, we relate the space-time finite element space
\[
  Q^p_h(Q) = S^p_{h_x}(\Omega) \otimes S^p_{h_t}(0,T)
\]
for a given polynomial degree $p \in \N$. Here, $S^p_{h_x}(\Omega)$ is the space of piecewise polynomial, continuous and complex-valued functions on intervals for $d=1$, or triangles for $d=2$, or tetrahedra for $d=3$ such that $v_{|\overline{\omega}}$ is a polynomial function of degree $p$ for all elements $\omega \in \mathcal T_\nu$, where $v \in  S^p_{h_x}(\Omega)$. Analogously, we define $S^p_{h_t}(0,T)$ as the space of piecewise polynomial, continuous and complex-valued functions. Last, we introduce the subspace $Q^p_{h,0,}(Q) = Q^p_h(Q) \cap W_{0,}(Q)$
of complex-valued functions, which satisfy the homogeneous initial and boundary conditions. With this, we consider the conforming space-time finite element method to find $\widehat \psi_h \in Q^p_{h,0,}(Q)$ such that
\begin{equation} \label{zank:VF_FEM}
    \forall v_h \in Q^p_{h,0,}(Q): \quad a(\widehat \psi_h,v_h) = \langle f, v_h \rangle_{L^2(Q)} - a(\psi_0, v_h).
\end{equation}
Finally, we define the approximation $\psi \approx \psi_h$ by
\[
  \psi_h(x,t) = \widehat \psi_h(x,t) + \psi_0(x) \quad \mbox{ for } (x,t) \in Q.
\]

The numerical analysis of the space-time finite element method~(\ref{zank:VF_FEM}) is not in the scope of this work and will be investigated in future work. Anyway, in this work, we focus on space-time solvers for the space-time finite element method~(\ref{zank:VF_FEM}). For this purpose, we state the resulting global linear system
\begin{equation} \label{zank:LGS}
    (\mathrm{i} B_t \otimes M_x + M_t \otimes A_x) \vec \psi = \vec f.
\end{equation}
Here, the real-valued, spatial matrices $M_x, A_x \in \R^{n_x \times n_x}$ are defined by
\begin{equation} \label{zank:matrix_x}
  M_x[k,j] = \int_\Omega \phi_j(x) \phi_k(x) \mathrm dx \quad \mbox{ and } \quad A_x[k,j] = \int_\Omega \nabla_x \phi_j(x) \cdot \nabla_x \phi_k(x) \mathrm dx
\end{equation}
for $k,j=1,\dots,n_x$ with real-valued basis functions $\phi_k \colon \overline{\Omega} \to \R$. The real-valued, temporal matrices $M_t, B_t \in \R^{n_t \times n_t}$ are
\begin{equation} \label{zank:matrix_t}
  M_t[k,j] = \int_0^T \varphi_j(t) \varphi_k(t) \mathrm dt \quad \mbox{ and } \quad B_t[k,j] = \int_0^T \partial_t \varphi_j(t) \varphi_k(t) \mathrm dt
\end{equation}
for $k,j=1,\dots,n_t$ with real-valued basis functions $\varphi_k \colon [0,T] \to \R$. The right-hand side $\vec f \in \C^{n_x \cdot n_t}$ of the linear system~(\ref{zank:LGS}) is given analogously. Note that the number of spatial degrees of freedom is $n_x$, whereas the number of temporal degrees of freedom is $n_t$, which leads to the total number of degrees of freedom $n = n_x \cdot n_t.$

Next, we state efficient space-time solvers for the global linear system~(\ref{zank:LGS}). To develop such space-time solvers, which exploit the Kronecker structure of the global linear system~(\ref{zank:LGS}), we assume the temporal decomposition
\[
    (\mathrm{i} B_t)^{-1} M_t = X_t S_t X_t^{-1}.
\]
Here, the matrix $X_t$ is regular, and the matrix $S_t \in \C^{n_t \times n_t}$ is regular and upper triangular, where $X_t, S_t$ have to be specified. Further, we set
\[
  Y_t = X_t^{-1} (\mathrm{i} B_t)^{-1}.
\]
With these representations, the solution $\vec \psi \in \C^{n_x \cdot n_t}$ of the global linear system~(\ref{zank:LGS}) is given by
\[
  \vec \psi = (X_t \otimes I_{n_x}) (I_{n_t} \otimes M_x + S_t \otimes A_x)^{-1} (Y_t \otimes I_{n_x}) \vec f,
\]
see \cite[Section~3.2]{FoltynLukasZankPRESB2024} and \cite[Eq.~(4.2)]{LangerZank2021}, where $I_{n_t} \in \R^{n_t \times n_t}$, $I_{n_x} \in \R^{n_x \times n_x}$ are identity matrices. This leads to the following Kronecker product solver, see \cite[Algorithm~1]{FoltynLukasZankPRESB2024}, \cite[Algorithm~4.2]{LangerZank2021} for the real-valued case:
\begin{enumerate}
  \item Compute the matrices $X_t, S_t$ of the decomposition $(\mathrm{i} B_t)^{-1} M_t = X_t S_t X_t^{-1}$.
  \item Calculate $\vec g = (\vec g_1, \dots, \vec g_{n_t})^\top = (Y_t \otimes I_{n_x}) \vec f$ with the vectors $\vec g_\ell \in \C^{n_x}$.
  \item For $\ell=n_t, n_t-1,\dots, 1$, solve the spatial problems
  \begin{equation} \label{zank:LGS_x}
    (M_x + S_t[\ell,\ell] A_x) \vec w_\ell = \vec g_\ell - \sum_{k=\ell+1}^{n_t} S_t[\ell,k] A_x \vec w_k
  \end{equation}
  for $\vec w_\ell \in \C^{n_x}$, where $\sum_{k=n_t+1}^{n_t}(\cdot) = 0$.
  \item Compute the solution $\vec \psi \in \C^{n_x \cdot n_t}$ by $\vec \psi = (X_t \otimes I_{n_x}) \vec w.$
\end{enumerate}
Note that steps 2 and 4 consists of few matrix multiplications, which are parallelizable and can be written as highly efficient operations, see \cite[Subsection~4.3.1, Subsection~4.4.1]{LangerZank2021}. In the case that $n_t \ll n_x$, the most expensive part is step~3, i.e., solving $n_t$ spatial subproblems of size $n_x$. This can be done by an iterative solver~\cite{FoltynLukasZankPRESB2024} or by sparse direct solvers~\cite{LangerZank2021}. The latter is considered in this work.

Finally, we state two choices for the complex matrices $X_t, S_t$ of the decomposition
\[
  (\mathrm{i} B_t)^{-1} M_t = X_t S_t X_t^{-1}
\]
in step 1 of the proposed solver:
\begin{enumerate}
  \item \underline{Bartels--Stewart method~\cite{BartelsStewart1972}}: The first choice is the Schur decomposition leading to a unitary matrix $X_t$ and an upper triangular matrix $S_t$ with the eigenvalues of $(\mathrm{i} B_t)^{-1} M_t$ on the diagonal of $S_t$.
  \item \underline{Fast diagonalization method~\cite{Lynch1964}}: The second choice is the eigenvalue decomposition resulting in the matrix $X_t$ consisting of eigenvectors of the matrix $(\mathrm{i} B_t)^{-1} M_t$ and the diagonal matrix $S_t$, where the diagonal is composed of the eigenvalues of $(\mathrm{i} B_t)^{-1} M_t$. Here, we have to assume that the matrix $(\mathrm{i} B_t)^{-1} M_t$ is diagonalizable. This is the case for a uniform temporal mesh size, see Table~\ref{zank:tab:FD} in Section~\ref{zank:sec:Num} and \cite[Section~3.2]{FoltynLukasZankPRESB2024}. However, for a complete analysis of these eigenvalues, we refer to future work. Anyway, the sum on the right side of the spatial linear systems~(\ref{zank:LGS_x}) is zero. Thus, the spatial linear systems~(\ref{zank:LGS_x}) can be solved in parallel, i.e., a time parallelization is possible.
\end{enumerate}

\section{Numerical Experiments} \label{zank:sec:Num}

In this section, we state numerical examples, which show the potential of the space-time finite element method~(\ref{zank:VF_FEM}) proposed in this work. We consider the unit square $\Omega = (0,1) \times (0,1) \subset \R^2$ as spatial domain and $T=5$. The initial spatial mesh is stated in Fig.~\ref{zank:Fig:Netze}, where we apply a uniform refinement strategy such that we have $N_x = 2048 \cdot 4^J$ spatial elements for the refinement levels $J= 0,\dots,4$.
\begin{figure}
\includegraphics[scale=0.66]{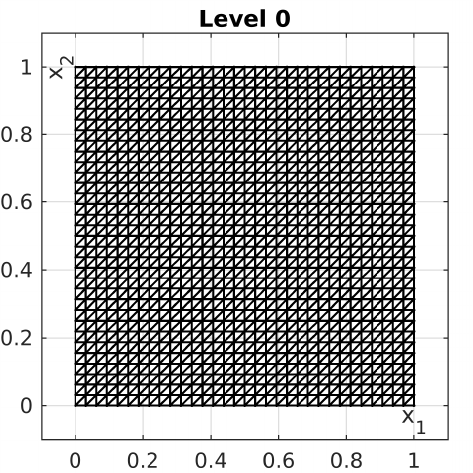}
\includegraphics[scale=0.5]{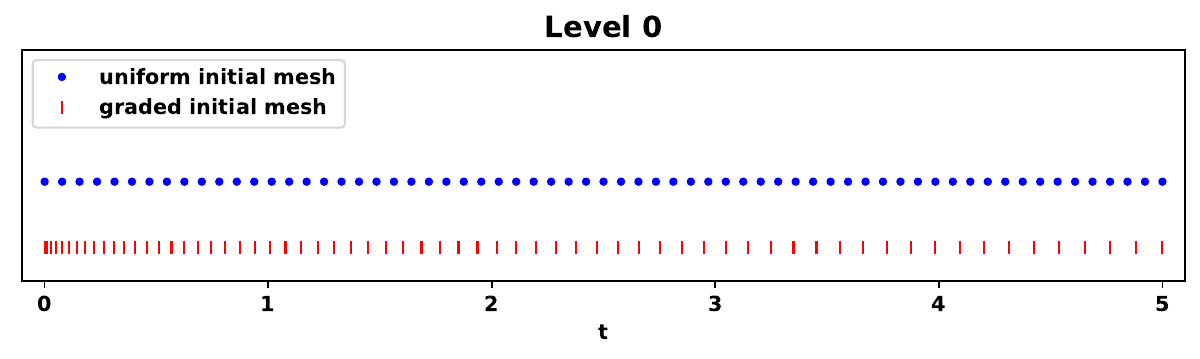}
\caption{Initial spatial mesh for $\Omega = (0,1) \times (0,1)$ and initial temporal meshes for $T=5$.} \label{zank:Fig:Netze}
\end{figure}
The uniform temporal mesh is given by
\begin{equation} \label{zank:Mesh_t}
  t_\ell = \frac{T \ell}{N_t} \quad \mbox{ for } \ell=0,\dots, N_t
\end{equation}
with $N_t = 64\cdot 2^J$ for $J=0,\dots,4$, see Fig.~\ref{zank:Fig:Netze}. The wave function $\psi \colon \, \overline{Q} \to \C$,
\[
  \psi(x_1,x_2,t) = \mathrm{e}^{\mathrm{i} t} \sin (\pi  x_1) \sin (\pi  x_2) \sin (t x_1 x_2), \quad (x_1,x_2,t) \in \overline{Q},
\]
is considered as exact solution to the time-de\-pen\-dent Schrö\-ding\-er equation~(\ref{zank:SchroedingerBeschraenkt}) with the corresponding right-hand side $f$ and the initial condition $\psi(x_1,x_2,0)= \psi_0(x_1,x_2) = 0$. In other words, we have that $\widehat \psi = \psi$ and $\widehat \psi_h = \psi_h$. We consider the polynomial degree $p=1$, i.e., the approximate solution $\psi_h \in Q^p_{h,0,}(Q)$ given by the space-time finite element method~(\ref{zank:VF_FEM}) is a piecewise multilinear function. We choose the usual nodal basis functions, i.e., the hat functions with respect to time, to compute analytically the matrices $M_x, A_x$ in (\ref{zank:matrix_x}) and the matrices $M_t, B_t$ in (\ref{zank:matrix_t}). To calculate the vector of the right-hand side~$\vec f$ of the linear system~(\ref{zank:LGS}), we apply high-order numerical integration. The numerical experiments were performed in MATLAB R2022a using internal MATLAB routines on a PC with two Intel Xeon CPUs E5-2687W v4 @ 3.00GHz, having 512 GB memory and 24 cores. In particular, we use the MATLAB routine mldivide (backslash operator) for solving the spatial problems~(\ref{zank:LGS_x}) by a sparse direct solver.

In Table~\ref{zank:tab:BSc}, we state the errors in space-time norms $\| \cdot \|_{L^2(Q)}$ and
\[
  |\cdot|_{H^1(Q)} = \left( \| \partial_t (\cdot) \|_{L^2(Q)}^2 + \| \nabla_x (\cdot) \|_{L^2(Q)^d}^2 \right)^{1/2}
\]
and the solving time in seconds s, which includes all steps of the Bartels--Stewart method but excludes the assembling time of the matrices $M_x, A_x, M_t, B_t$ and the vector $\vec f$. Here, we use a uniform refinement strategy for the spatial and temporal meshes, i.e., we quadruple the number of spatial elements $N_x$ and double the number of temporal elements $N_t$. Thus, the total number of degrees of freedom $n = n_x \cdot n_t$ grows by a factor 8. We observe second-order convergence in $\| \cdot \|_{L^2(Q)}$ and first-order convergence in $|\cdot|_{H^1(Q)}$.

\begin{table}
\caption{Results of the finite element method~(\ref{zank:VF_FEM}) with $p=1$, the temporal mesh~(\ref{zank:Mesh_t}) for $T=5$ and the initial spatial mesh in Fig.~\ref{zank:Fig:Netze} using the Bartels--Stewart method.}\label{zank:tab:BSc}
\begin{tabular}{|r|c|c|c|c|c|c|r|}
	\hline
  $n$ & $h_x$ & $h_t$ & $\|\psi - \psi_h\|_{L^2(Q)}$  & eoc & $|\psi - \psi_h|_{H^1(Q)}$ & eoc & Solve in s\\
		\hline
       61504 & 2.2e-02 & 7.8e-02 & 3.2e-03 &  -  & 2.4e-01 &  -   &     0.3 \\
      508032 & 1.1e-02 & 3.9e-02 & 8.1e-04 & 1.9 & 1.2e-01 & 1.0  &     3.1 \\
     4129024 & 5.5e-03 & 2.0e-02 & 2.0e-04 & 2.0 & 6.0e-02 & 1.0  &    30.9 \\
    33292800 & 2.8e-03 & 9.8e-03 & 5.1e-05 & 2.0 & 3.0e-02 & 1.0  &   405.4 \\
   267387904 & 1.4e-03 & 4.9e-03 & 1.3e-05 & 2.0 & 1.5e-02 & 1.0  &  4869.8 \\
    \hline
\end{tabular}
\end{table}

In Table~\ref{zank:tab:FD}, we consider the same situation as in Table~\ref{zank:tab:BSc}. However, we apply the fast diagonalization method for solving the global linear system~(\ref{zank:LGS}). Moreover, we solve the spatial linear systems~(\ref{zank:LGS_x}) in parallel using 24 cores. Thus, there is no parallelization for the direct solver  mldivide (backslash operator) possible. We observe the identical errors as for the Bartels--Stewart method in Table~\ref{zank:tab:BSc}. Comparing the solving times in Table~\ref{zank:tab:FD} and Table~\ref{zank:tab:BSc} yields that the solving time is drastically decreased by applying the fast diagonalization method. In the last column of Table~\ref{zank:tab:FD}, we state the spectral condition number $\kappa_2(X_t)$ of the transformation matrix $X_t$ related to the eigenvectors of the matrix  $(\mathrm{i} B_t)^{-1} M_t$. We report that $\kappa_2(X_t)$ does not depend on the terminal time $T$, i.e., $\kappa_2(X_t)$ depends solely on the number $n_t$ of degrees of freedom in the case of a uniform mesh size~$h_t$. Further, we observe that the spectral condition number $\kappa_2(X_t)$ does not grow exponentially with respect to $n_t$. In other words, the fast diagonalization method is applicable also for a high number of temporal degrees of freedom.

\begin{table}
\caption{Results of the finite element method~(\ref{zank:VF_FEM}) with $p=1$, the temporal mesh~(\ref{zank:Mesh_t}) for $T=5$ and the initial spatial mesh in Fig.~\ref{zank:Fig:Netze} using the fast diagonalization method.}\label{zank:tab:FD}
\begin{small}
\begin{tabular}{|r|c|c|c|c|c|c|r|c|}
	\hline
  $n$ & $h_x$ & $h_t$ & $\|\psi - \psi_h\|_{L^2(Q)}$  & eoc & $|\psi - \psi_h|_{H^1(Q)}$ & eoc & Solve in s & $\kappa_2(X_t)$ \\
		\hline
       61504 & 2.2e-02 & 7.8e-02 & 3.2e-03 &  -  & 2.4e-01 &  -   &     0.1 & 2.3e+02 \\
      508032 & 1.1e-02 & 3.9e-02 & 8.1e-04 & 1.9 & 1.2e-01 & 1.0  &     0.3 & 7.0e+02 \\
     4129024 & 5.5e-03 & 2.0e-02 & 2.0e-04 & 2.0 & 6.0e-02 & 1.0  &     2.3 & 2.2e+03 \\
    33292800 & 2.8e-03 & 9.8e-03 & 5.1e-05 & 2.0 & 3.0e-02 & 1.0  &    20.5 & 7.1e+03 \\
   267387904 & 1.4e-03 & 4.9e-03 & 1.3e-05 & 2.0 & 1.5e-02 & 1.0  &   248.8 & 2.4e+04 \\
    \hline
\end{tabular}
\end{small}
\end{table}

Last, in Table~\ref{zank:tab:FD_nonuniform}, we consider the situation as in Table~\ref{zank:tab:FD}, except that we replace the initial mesh in $t$ with the graded mesh
\[
  \{ t_\ell = T\left(\frac{\ell}{64}\right)^q: \ell=0,\dots, 64 \}
\]
with $q=1.5$, see Fig.~\ref{zank:Fig:Netze}. This initial mesh is refined uniformly when increasing the refinement level. Note that for these temporal meshes, $h_t \approx 12 \cdot h_{t,\min}$ holds with $h_{t,\min} = \min_{\ell} |t_\ell - t_{\ell-1}|$. We report that the Bartels--Stewart method and the fast diagonalization method lead to identical errors. In particular, the fast diagonalization method is also applicable for these non-uniform temporal meshes.

\begin{table}
\caption{Results of the finite element method~(\ref{zank:VF_FEM}) with $p=1$, graded initial mesh in $t$ for $T=5$ and the initial spatial mesh in Fig.~\ref{zank:Fig:Netze} using the fast diagonalization method.}\label{zank:tab:FD_nonuniform}
\begin{small}
\begin{tabular}{|r|c|c|c|c|c|c|r|c|}
	\hline
  $n$ & $h_x$ & $h_t$ & $\|\psi - \psi_h\|_{L^2(Q)}$  & eoc & $|\psi - \psi_h|_{H^1(Q)}$ & eoc & Solve in s & $\kappa_2(X_t)$ \\
		\hline
       61504 & 2.2e-02 & 1.2e-01 & 3.2e-03 &  -  & 2.4e-01 &  -   &     0.1 & 1.1e+02  \\
      508032 & 1.1e-02 & 5.8e-02 & 8.4e-04 & 1.9 & 1.2e-01 & 1.0  &     0.3 & 3.6e+02  \\
     4129024 & 5.5e-03 & 2.9e-02 & 2.1e-04 & 2.0 & 6.1e-02 & 1.0  &     2.1 & 1.1e+03  \\
    33292800 & 2.8e-03 & 1.5e-02 & 5.4e-05 & 2.0 & 3.0e-02 & 1.0  &    20.7 & 3.6e+03  \\
   267387904 & 1.4e-03 & 7.3e-03 & 1.3e-05 & 2.0 & 1.5e-02 & 1.0  &   246.6 & 1.2e+04  \\
    \hline
\end{tabular}
\end{small}
\end{table}

\section{Conclusions} \label{zank:sec:Zum}

In this work, we introduced an $H^1$ conforming space-time finite element method for the linear time-de\-pen\-dent Schrö\-ding\-er equation, using piecewise polynomial, continuous trial and test functions. Moreover, we stated efficient direct space-time solvers leading to the Bartels--Stewart method and the fast diagonalization method. The latter allows for time parallelization. We presented numerical examples for a two-dimensional spatial domain, which show the potential of the proposed methods.

\section*{Acknowledgments}
\noindent
This research was funded in part by the Austrian Science Fund (FWF) [10.55776/P36150].

\section*{Declarations}
\noindent
The author has no competing interests to declare that are relevant to the content of this article.


\end{document}